\DeclareMathOperator*{\argmin}{arg\,min}
\newcommand{\ceil}[1]{\lceil #1 \rceil}
\newcommand{\R}{\ensuremath{\mathbb R}}  
\newcommand{\N}{\ensuremath{\mathbb N}}  
\newtheorem{remark}{Remark}
\newtheorem{definition}{Definition}
\newtheorem{theorem}{Theorem}
\newtheorem{proposition}{Proposition}
\newtheorem{lemma}{Lemma}
\newtheorem{assumption}{Assumption}
\renewcommand{\k}{\mathsf{k}}
\newcommand{\F}{F^\varepsilon}
\newcommand{\g}{g^\varepsilon}
\newcommand{\G}{G^\varepsilon}
\newcommand{\U}{\mathcal{U}^\varepsilon}
\newcommand{\V}{V^\varepsilon}
\newcommand{\J}{J^\varepsilon}
\newcommand{\B}{B^\varepsilon}
\title{Kernel EDMD for data-driven nonlinear Koopman MPC with stability guarantees}
\author{Lea Bold$^{1}$} \address{$^{1}$Optimization-based Control Group, Institute of Mathematics, Technische Universität Ilmenau, Germany.\\ Mail: \textsc{\{lea.bold, karl.worthmann\}@tu-ilmenau.de}}
\author{Manuel Schaller$^{2}$}\address{$^{2}$Faculty of Mathematics, Chemnitz University of Technology\\ Mail: \textsc{manuel.schaller@math.tu-chemnitz.de}}
\author{Irene Schimperna$^{3}$}\address{$^{3}$Civil Engineering and Architecture Department, University of Pavia, Italy \\ Mail: \textsc{irene.schimperna01@universitadipavia.it}}
\author{Karl Worthmann$^{1}$}
\thanks{L.~Bold is grateful for the support of the German Research Foundation (DFG; project number 545246093).}
\begin{document}

\begin{abstract}                
    Extended dynamic mode decomposition (EDMD) is a popular data-driven method to predict the action of the Koopman operator, i.e., the evolution of an observable function along the flow of a dynamical system.
    In this paper, we leverage a recently-introduced kernel EDMD method for control systems for data-driven model predictive control. Building upon pointwise error bounds proportional in the state, we rigorously show practical asymptotic stability of the origin w.r.t.\ the MPC closed loop without stabilizing terminal conditions. The key novelty is that we avoid restrictive invariance conditions. Last, we verify our findings by numerical simulations.
\end{abstract}

\maketitle

\section{Introduction}

\noindent The Koopman operator and its data-driven approximation via extended dynamic mode decomposition (EDMD) are popular and powerful tools for analysis, prediction, and control of dynamical systems, see \cite{mauroy2020introduction}. The Koopman operator is a linear but infinite-dimensional operator to predict the behavior of an associated (nonlinear) dynamical system along observable functions. For autonomous systems, EDMD leverages finitely many samples of observable functions to obtain a linear surrogate model of the system in a lifted space, see, e.g., \cite{williams2015data}. 
Various approaches extend this idea to control systems. In EDMD with control (EDMDc; \cite{proctor2016dynamic}), a surrogate control system that is linear in (lifted) state and control is used. 
However, as shown in \cite{iacob:toth:schoukens:2022}, this method is often insufficient to capture direct state-control couplings. Bilinear models as in \cite{peitz:otto:rowley:2020} offer a reliable alternative, which is underpinned by finite-data error bounds introduced in \cite{nuske2023finite}.
A key challenge is the choice of the observable functions.
Here, data-driven approaches, such as neural networks (\cite{yeung2019learning}) or kernel EDMD provide a remedy. Building upon the approximation-theoretic foundation via reproducing kernel Hilbert spaces (RKHSs), finite-data error bounds 
were recently deduced in~\cite{kohne2024infty}. 
Importantly in view of safe data-driven controller design, these bounds are pointwise and deterministic, see~\cite{bold2024kernel} for an extension to control systems and \textit{proportional error bounds} guaranteeing that the error vanishes at a desired set point

When employing data-driven models in predictive control, error bounds are crucial to show, e.g., stability properties. 
For predictive control of linear systems, a powerful data-driven surrogate is given by the fundamental lemma in \cite{WillRapi05} providing an exact representation of the input-output behavior, see also the recent survey~\cite{FaulOu23}. 
For nonlinear systems, EDMD surrogates have already been successfully employed as prediction models in model predictive control (MPC), e.g., in~\cite{korda2018linear, peitz:otto:rowley:2020}. 
However in EDMD-based approaches, exactness of the data-driven model is a restrictive assumption due to the finitely-many samples and observables. 
However, building upon error bounds on EDMD, it is possible to exploit the nominal robustness of MPC, see \cite{grimm2007nominally}, to show that the control law designed using the data-driven surrogate stabilizes the original system, see, e.g., \cite[Section~4.2]{bold2024kernel}. 
Under invariance assumptions on the dictionary as proposed in~\cite{GoswPale21}, this was proven for EDMD-based surrogates in \cite{WortStra24,bold2024data} with and without terminal conditions.

Our contribution is to leverage kernel EDMD and the corresponding error analysis derived in~\cite{bold2024kernel} to prove stability without imposing invariance assumptions. 
Invoking cost controllability of the original 
system, we rigorously show practical asymptotic stability of the origin w.r.t.\ the MPC closed loop. To this end, we leverage the proportional error dervied in~\cite{bold2024kernel} to prove that cost controllability is preserved for the kEDMD approximant, where we have control on the approximation accuracy.
\\

\noindent \textbf{Notation}. The comparison functions used in the following are $\alpha \in C(\R_{\ge0}, \R_{\ge0})$ is said to be of class~$\mathcal{K}$, if it is strictly increasing with $\alpha(0) = 0$ and of class $\mathcal{K}_\infty$ if it is, in addition, unbounded. 
Furthermore, a function $\delta \in C(\R_{\ge0}, \R_{\ge0})$ is said to be of class $\mathcal{L}$ if it is strictly decreasing with $\lim_{t\rightarrow\infty} \delta(t) = 0$. Lastly, 
$\beta \in C(\R^2_{\ge0}, \R_{\ge0})$ is of class $\mathcal{KL}$ if $\beta(\cdot, t) \in \mathcal{K}$ and $\beta(r, \cdot) \in \mathcal{L}$.
The notation $\|\cdot\|$ is used for the Euclidean norm on $\R^n$ and its induced matrix norm on $\R^{n\times n}$. For a number~$d\in\N$, the abbreviation $[1:d] := \mathbb{Z} \cap [1,d]$ is used. By $C_b(\Omega)$, we denote the space of bounded continuous functions on a set $\Omega\subset\R^n$. 
We require the following continuity notion.

\section{Kernel EDMD and Control}\label{sec:kEDMD}

\noindent Let $\Omega \subset \R^n$ be an open and bounded set with Lipschitz-boundary with $0 \in \operatorname{int}(\Omega)$. For a map~$F:\Omega \rightarrow \R^n$, we consider the discrete-time dynamical system given by
\begin{align}\label{eq:dynamics:F}\tag{DS}
    x^+ = F(x).
\end{align}
In the following, we assume that the set~$\Omega$ is forward invariant w.r.t.\ the dynamics~$F$ of~\eqref{eq:dynamics:F}, i.e., 
$F(x) \in \Omega$ for all $x \in \Omega$, to streamline the presentation.\footnote{We refer to~\cite{kohne2024infty} for the necessary (technical) modifications if this assumption does not hold.}
The associated (infinite-dimensional) linear and bounded Koopman operator~$\mathcal{K}:C_b(F(\Omega)) \rightarrow C_b(\Omega)$ is defined by the identity 
\begin{align*}
    (\mathcal{K} \psi)(\hat{x}) = \psi(F(\hat{x})) \qquad \forall \ \hat{x} \in \Omega, \ \psi \in C_b(F(\Omega)).
\end{align*}

\noindent \textbf{Recap of reproducing kernel Hilbert spaces}.
Let $\k: \R^n \times \R^n \rightarrow \R$ be a symmetric and strictly-positive-definite kernel function, i.e., for every set~$\mathcal{Z} = \{z_1, \dots, z_p\} \subset \R^n$ of pairwise distinct elements, the \textit{kernel matrix} $K_\mathcal{Z} = (\k(z_i, z_j))_{i, j = 1}^p$ is positive definite. 
For $z \in \R^n$, the \textit{canonical features}~$\Phi_z$ of $\k$ are defined by $\Phi_z(x) = \k(z, x)$, $x \in \R^n$. 
By completion, the kernel~$\k$ induces an Hilbert space~$\mathbb{H}$ of functions with inner product $\langle \cdot,\cdot \rangle_{\mathbb{H}}$, see \cite{Wend04}. Importantly, the elements $f \in \mathbb{H}$ fulfill
\begin{align*}
    f(x) = \langle f, \Phi_x \rangle \qquad\forall\,x \in \mathbb{R}^n,
\end{align*}
i.e., the reproducing property, such that $\mathbb{H}$ is termed \emph{reproducing kernel Hilbert space} (RKHS).

\begin{remark}[Wendland kernels]\label{rem:wendland}
    In our work, we use piecewise polynomial and compactly supported kernel functions based on the Wendland radial basis functions (RBF)~$\Phi_{n, k}: \R^n \rightarrow \R$ with smoothness degree~$k \in \N$. 
    In our numerical simulations in Section~\ref{sec:simulations}, $k = 1$ is used, which corresponds for $n \leq 3$ to $\phi_{n, 1} \in \mathcal{C}^{2}([0,\infty),\mathbb{R})$ defined by
    $\phi_{n, 1}(r) = \frac{1}{20}(1-r)^4 (4r+1)$ for $r < 1$ (and $0$ otherwise).
    The induced kernel is given by
    \begin{align*}
        \k(x,y) = \phi_{n,1}(\|x-y\|) \qquad \text{for } x, y \in \R^n,
    \end{align*}
    see \cite[Table~9.1]{Wend04}. The RKHS induced by the Wendland kernels coincides with fractional Sobolev spaces with equivalent norms, a key property, which also holds for Matérn kernels; see \cite{FassYe11}. 
\end{remark}

\noindent \textbf{Kernel EDMD for autonomous systems}. 
For a set of $d \in \N$ pairwise distinct points
\begin{align}\label{eq:Xcal}
    \mathcal{X} = \{x_1, \dots, x_d\} \subset \Omega,
\end{align} 
we define the $d$-dimensional subspace of features $V_\mathcal{X} = \operatorname{span}\{\Phi_{x_1}, \dots, \Phi_{x_d}\}\subset \mathbb{H}$.
Further, $P_\mathcal{X}$ denotes the orthogonal projection onto $V_\mathcal{X}$, i.e., for $f \in \mathbb{H}$, $P_\mathcal{X} f$ solves the regression problem 
    $\min_{g \in V_\mathcal{X}} \|g - f\|^2_\mathbb{H}$.
By the reproducing property, $P_\mathcal{X} f$ interpolates~$f$ at $\mathcal{X}$.
As introduced in \cite{kohne2024infty}, we consider the matrix approximant $\widehat{K}$ of $P_\mathcal{X}\mathcal{K}|_{V_{\mathcal{X}}}$, which is given by
\begin{align}\label{eq:Koopman}
    \widehat{K} = K_\mathcal{X}^{-1} K_{F(\mathcal{X})} K_\mathcal{X}^{-1} \in \R^{d \times d}, 
\end{align}
where $K_{F(\mathcal{X})} = (\k(x_i, F(x_j)))_{i, j = 1}^d$. This corresponds to kernel Extended Dynamic Mode Decomposition (kEDMD).  
For an observable function $\psi \in \mathbb{H}\subset C_b(F(\Omega))$, the surrogate dynamics are given by
\begin{align*}
    \psi(F(x)) \approx \psi^+(x) := \sum_{i = 1}^d (\widehat{K} \psi_\mathcal{X})_i \Phi_{x_i}(x), 
\end{align*}
where $\psi_\mathcal{X} = (\psi(x_1), \dots, \psi(x_d))^\top$.
A bound on the full approximation error was established in~\cite[Theorem~5.2]{kohne2024infty} and is provided in the following theorem. Hence, the approximation accuracy of kEDMD can be described using the \textit{fill distance}~$h_\mathcal{X}$ of the set $\mathcal{X}$ in $\Omega$, which is defined by
\begin{align*}
    h_\mathcal{X} = \sup_{x \in \Omega} \min_{x_i \in \mathcal{X}} \|x - x_i \|.
\end{align*}
\begin{theorem}
    \label{thm:koehne}
    Let $\mathbb{H}$ be the RKHS on~$\Omega$ generated by the Wendland kernels with smoothness degree $k \in \N$.
    Further, let $\mathcal{X} :=\{ x_i \mid i \in [1:d] \} \subset \Omega$ be a set of finitely-many pairwise-distinct data points and the right-hand side ${F \in C^p(\overline{\Omega},\R^n)}$ of system~\eqref{eq:dynamics:F}, with $p = \lceil \frac{n + 1}{2} + k\rceil$. 
    Then there are constants $C,h_0 > 0$ such that the bound on the full approximation error 
    \begin{align}\label{eq:error_bound}
        \| \mathcal{K} - \widehat{K}\|_{\mathbb{H} \rightarrow C_b({\Omega},\R^n)} \le Ch_\mathcal{X}^{k+1 \mathbin{/} 2}
    \end{align}
    holds for all fill distances~$h_{\mathcal{X}}$ satisfying $h_\mathcal{X} \leq h_0$.
\end{theorem}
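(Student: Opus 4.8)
The plan is to interpret the data-driven matrix $\widehat{K}$ as the compression $P_\mathcal{X}\mathcal{K}P_\mathcal{X}$ of the Koopman operator to the finite-dimensional feature space $V_\mathcal{X}$, and then to reduce the claimed bound to two interpolation-error estimates controlled by the fill distance. Concretely, for an observable $\psi \in \mathbb{H}$ the surrogate evaluates the interpolant $P_\mathcal{X}\psi$ through $F$ and re-interpolates, so that $\psi^+ = P_\mathcal{X}\mathcal{K}P_\mathcal{X}\psi$ (modulo the symmetrization conventions built into $K_{F(\mathcal{X})}$). I would therefore start from the splitting
\begin{equation*}
    \mathcal{K}\psi - \psi^+ = \underbrace{\mathcal{K}(\psi - P_\mathcal{X}\psi)}_{=:\,e_1} + \underbrace{(\mathrm{Id} - P_\mathcal{X})\mathcal{K}P_\mathcal{X}\psi}_{=:\,e_2},
\end{equation*}
in which both summands are of the form ``interpolation remainder of a function in $\mathbb{H}$'', precisely the object for which sharp fill-distance bounds are available.

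The two analytic tools I would invoke are (i) the norm equivalence $\mathbb{H}\simeq H^\tau(\Omega)$ with $\tau = k + \tfrac{n+1}{2}$ for the Wendland kernel (Remark~\ref{rem:wendland}, using that $\Omega$ is bounded with Lipschitz boundary so that a bounded extension operator exists), and (ii) the sampling inequality / zeros lemma, yielding $C,h_0>0$ such that every $u\in H^\tau(\Omega)$ obeys $\|u - P_\mathcal{X}u\|_{C_b(\Omega)} \le C\,h_\mathcal{X}^{\,\tau - n/2}\|u\|_{H^\tau(\Omega)}$ whenever $h_\mathcal{X}\le h_0$. Since $\tau - n/2 = k+\tfrac12$, this already produces the exponent in~\eqref{eq:error_bound}. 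For $e_1$ I would use forward invariance $F(\Omega)\subseteq\Omega$ to write $\|\mathcal{K}(\psi - P_\mathcal{X}\psi)\|_{C_b(\Omega)} = \sup_{x\in\Omega}|(\psi - P_\mathcal{X}\psi)(F(x))| \le \|\psi - P_\mathcal{X}\psi\|_{C_b(\Omega)}$ and then apply (ii) together with the orthogonality estimate $\|\psi - P_\mathcal{X}\psi\|_{\mathbb{H}} \le \|\psi\|_{\mathbb{H}}$, giving $\|e_1\|_{C_b(\Omega)} \le C\,h_\mathcal{X}^{k+1/2}\|\psi\|_{\mathbb{H}}$.

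The term $e_2$ is where the regularity assumption $F\in C^p$ with $p = \lceil\tau\rceil = \lceil \tfrac{n+1}{2}+k\rceil$ enters, and I expect this to be the main obstacle. Here $e_2$ is the interpolation remainder of $g := \mathcal{K}P_\mathcal{X}\psi = (P_\mathcal{X}\psi)\circ F$, so by (ii) it suffices to show $g \in H^\tau(\Omega)$ with $\|g\|_{H^\tau(\Omega)} \le C\|\psi\|_{\mathbb{H}}$. Combined with $\|P_\mathcal{X}\psi\|_{\mathbb{H}} \le \|\psi\|_{\mathbb{H}}$, this reduces to a composition estimate: the Nemytskii/composition operator $u \mapsto u\circ F$ must be bounded on $H^\tau(\Omega)$ with a constant depending only on $\|F\|_{C^p}$ and the geometry of $\Omega$. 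For integer $\tau$ this is the chain and Leibniz rules; the difficulty is the fractional part of $\tau$, which I would handle either by interpolating between the integer-order estimates or by a direct Gagliardo-seminorm computation, and it is exactly this step that forces control of $p=\lceil\tau\rceil$ derivatives of $F$.

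Finally, combining the bounds on $e_1$ and $e_2$ yields $\|\mathcal{K}\psi - \psi^+\|_{C_b(\Omega)} \le C\,h_\mathcal{X}^{k+1/2}\|\psi\|_{\mathbb{H}}$ for all $\psi\in\mathbb{H}$ and $h_\mathcal{X}\le h_0$; taking the supremum over $\|\psi\|_{\mathbb{H}}\le 1$ gives the operator-norm bound, and the vector-valued statement in $C_b(\Omega,\R^n)$ follows by applying the scalar estimate componentwise, e.g.\ to the coordinate observables. The constants $C$ and $h_0$ are those furnished by the sampling inequality and the composition estimate, hence independent of the particular point set $\mathcal{X}$ beyond its fill distance.
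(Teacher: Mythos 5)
First, a point of reference: the paper itself does not prove Theorem~\ref{thm:koehne} at all --- it is imported verbatim from \cite[Theorem~5.2]{kohne2024infty} (``A bound on the full approximation error was established in\ldots''), so your proposal has to be measured against the strategy of that cited work. Your outline does mirror it: identify $\widehat{K}$ with the compression $P_\mathcal{X}\mathcal{K}P_\mathcal{X}$ (modulo the transpose convention, which you rightly flag), pass to the Sobolev picture via the norm equivalence $\mathbb{H}\simeq H^\tau(\Omega)$ with $\tau = k + \tfrac{n+1}{2}$, and reduce everything to interpolation remainders controlled by the sampling inequality, whose exponent $\tau - n/2 = k + \tfrac12$ indeed produces the claimed rate. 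Your treatment of $e_1$ (forward invariance plus $\|\psi - P_\mathcal{X}\psi\|_{\mathbb{H}} \le \|\psi\|_{\mathbb{H}}$) is correct.

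The genuine gap is the composition estimate you need for $e_2$. You assert that $u \mapsto u\circ F$ is bounded on $H^\tau(\Omega)$ ``with a constant depending only on $\|F\|_{C^p}$ and the geometry of $\Omega$'', that the integer case is ``the chain and Leibniz rules'', and that the fractional case follows by interpolation. This claim is false as stated, and the obstruction has nothing to do with the fractional part of $\tau$. Take $\Omega = (-1,1)^2$ and the smooth, forward-invariant map $F(x_1,x_2) = (x_1,0)$. Then $u\circ F$ is the constant-in-$x_2$ extension of the trace $u(\cdot,0)$; since the trace operator maps $H^\tau(\Omega)$ \emph{onto} $H^{\tau-1/2}$ of the line and this half-derivative loss is sharp, there exist $u \in H^\tau(\Omega)$ with $u\circ F \notin H^\tau(\Omega)$, even though $\|F\|_{C^p}$ is as benign as possible. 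The mechanism is already visible at integer order: Fa\`a di Bruno produces terms $(D^\beta u)\circ F$ in which $D^\beta u$ is merely an $L^2$-function (not bounded), and estimating such terms in $L^2$ requires a change of variables, hence lower bounds on the Jacobian of $F$ --- injectivity/nondegeneracy, not just $C^p$-size; for the same reason the proposed interpolation argument has no valid endpoint estimates to interpolate between. So your program cannot be completed as written: one must either impose structural hypotheses on $F$ (e.g., that $F$ is a $C^p$-diffeomorphism onto its image, as holds for sampled ODE flow maps) or replace the Sobolev-composition step by a genuinely different mechanism. Establishing this Koopman-invariance property is precisely the technical core of the proof in \cite{kohne2024infty}; it is not a routine consequence of the chain rule, and it is exactly the step your proposal leaves open.
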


We note that for a large number of data points in $\mathcal{X}$, the computation of the approximation of the Koopman operator~\eqref{eq:Koopman} can be numerically instable, due to a bad conditioning of the kernel matrix~$K_\mathcal{X}$. 
To alleviate this, one may include regularization by replacing ${K}_\mathcal{X}^{-1}$ with $({K}_\mathcal{X}+\lambda I)^{-1}$ with regularization parameter $\lambda > 0$. 
Error bounds in the spirit of Theorem~\ref{thm:koehne} for this regularized surrogate were proven in \cite[Thm. 2.4]{bold2024kernel}.
\\

\noindent \textbf{KEDMD for control systems}.
We briefly recall the kernel-EDMD scheme proposed in~\cite{bold2024kernel} for control-affine systems
\begin{align}\label{eq:dynamics:control}
    x^+ = F(x, u) = g_0(x) + G(x) u
\end{align}
with locally Lipschitz-continuous maps $g_0:\Omega \rightarrow \R^n$ and $G:\Omega \rightarrow \R^{n \times m}$ with Lipschitz constants $L_{g_0}, L_G > 0$ and control input $u$ restricted to a compact set $\mathbb{U} \subset \R^m$ containing the origin in its interior. 
An alternative approach that shares the advantage of flexible state-control sampling was proposed in~\cite{bevanda2024nonparametric} using an additional kernel function to express the dependency on the control input. 
Therein, however, no error bounds were provided. 
In addition to rigorous and uniform error bounds, which are crucial for provable guarantees for data-driven MPC, the approach proposed in~\cite{bold2024kernel} allows to counteract numerical ill-conditioning by decomposing the approximation process in two steps labeled as macro and micro level as explicated in the following.
\\

\begin{assumption}[Data requirements]\label{a:data}
    For $\mathcal{X}$ defined by~\eqref{eq:Xcal} with $x_1 = 0$ and cluster radius $\varepsilon_c > 0$, let for each $i \in [1:d]$ data triplets $(x_{ij}, u_{ij}, x_{ij}^+)$, $j \in [1:d_i]$ with $d_i \geq m + (1 - \delta_{1i})$, be given such that
    \begin{equation}\label{eq:ass:data}
        {x}_{ij} \in \mathcal{B}_{\varepsilon_c}(x_i) \cap \Omega, \qquad x_{ij}^+ = F(x_{ij}, u_{ij}), \qquad \operatorname{rank}([u_{i1} \mid \ldots \mid u_{id_i}]) = m.
    \end{equation}
\end{assumption}

Under Assumption \ref{a:data}, we have overall
$D:=\sum\nolimits_{i=1}^{d} d_i$ data triplets. 
We call $x_i$ a virtual observation (or cluster) point to emphasize that no samples at $x_i$ are required. We define the matrices
\[
    U_i := \begin{bmatrix} 
        1 & \dots & 1 \\
        u_{i1} & \dots & u_{id_i}
    \end{bmatrix}, \qquad i \in [1:d].
\]
In the first step, the data points $(x_{ij}, u_{ij}, x^+_{ij})$, $i \in [1:d_i]$, are used to compute an approximation $H_i = [\tilde{g}_0(x_i) \mid \tilde{G}(x_i)] \in \mathbb{R}^{n \times (m+1)}$ of the matrix $[g_0(x_i) \mid G(x_i)]$ for each $x_i \in \mathcal{X}$ by solving the linear regression problem 
\begin{align*}
    \argmin_{H_i} \big\| [x^+_{i1} \mid \ldots \mid x^+_{id_i}] - H_i U_i \big\|_F.
\end{align*}
The solution may be expressed by the pseudoinverse~$U_i^\dagger$, which, for every virtual observation point~$x_i$, is well defined in view of our imposed full row rank of the control samples.

In the second step, the interpolation coefficients are computed analogously to the autonomous case, which leads to the following propagation step of an observable~$\psi$:
\begin{align}\label{eq:kEDMD_model}
    \psi(F(x,u)) \approx \psi^+_\varepsilon(x) := \sum_{i = 1}^d \Big[ (\widehat{K}_0\psi_\mathcal{X})_i \Phi_{x_i}(x) + \sum_{j = 1}^m (\widehat{K}_j \psi_\mathcal{X})_i \Phi_{x_i}(x) u_j \Big],
\end{align}
with $\widehat{K}_j = K_\mathcal{X}^{-1}K_{\tilde{g}_j(\mathcal{X})}K_\mathcal{X}^{-1}$, where $(K_{\tilde g_j(\mathcal{X})})_{k,l} = \k(x_k,\tilde g_j(x_l)) = \k(x_k, (H_l)_{:,j+1})$ for all $x_k,x_l\in \mathcal{X}$ and $j \in [0:m]$.
Then, we directly construct the state-space surrogate of system~\eqref{eq:dynamics:control}, i.e., 
\begin{align}\label{eq:dynamics:kEDMD:control}
    x^+ = \F(x, u) = \g_0(x) + \G(x) u,
\end{align}
using the approximants~\eqref{eq:kEDMD_model} of the Koopman operator with $\psi(x) = x_\ell$, that is,
\begin{align*}
    g_0^\varepsilon(x)_\ell := \sum_{i=1}^d(\widehat K_{0} (x_\ell)_\mathcal{X})_i \Phi_{x_i}(x),\quad (G^\varepsilon(x))_{\ell,j} = \sum_{i=1}^d (\widehat{K}_j (x_\ell)_\mathcal{X})_i \Phi_{x_i}(x).
\end{align*}
for $\ell \in [1:n]$ and $(\ell,j) \in [1:n]\times [1:m]$, respectively.
The $\varepsilon$~notation corresponds to the following bound on the approximation accuracy, which is an adaptation of Theorem~4.3 in~\cite{bold2024kernel}.
\begin{theorem}[Approximation error]\label{thm:control_main}
    Let the smoothness degree of the Wendland kernel function be $k \geq 1$. 
    Furthermore, let Assumption~\ref{a:data} on the data set hold. 
    Then, there exist constants $C_1, C_2, h_0 > 0$ such that, if the fill distance~$h_\mathcal{X}$ satisfies $h_\mathcal{X} \leq h_0$, the error bound 
    \begin{align}\label{eq:prop error:contr}
        \|F(x,u) - \F(x,u)\|_\infty & \le C_1 \varepsilon_{h_\mathcal{X}} \operatorname{dist}(x, \mathcal{X}) + C_2 c \|K_\mathcal{X}^{-1}\|\varepsilon_c =: \varepsilon
    \end{align}
    holds for all $(x,u) \in \Omega \times \mathbb{U}$. Thereby, the error bound depends on the cluster size $\varepsilon_c$, $\varepsilon_{h_\mathcal{X}} = h_\mathcal{X}^{k-1/2}$ 
    and $c$ which is defined by

    \begin{align*}
        c := \max_{i \in [1:d]} \| U_i^\dagger\| \cdot \Phi_{n,k}^{1/2}(0)\big( \max_{v \in \mathbb{R}^d: \| v \|_\infty \leq 1} v^\top {K}_\mathcal{X}^{-1}v \big)^{1/2}.
    \end{align*}
    
\end{theorem}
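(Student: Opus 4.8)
The plan is to adapt the argument of \cite[Thm.~4.3]{bold2024kernel}, exploiting that the surrogate~\eqref{eq:dynamics:kEDMD:control} is assembled in two stages (the macro and micro levels) and that the two associated error sources decouple. Since $F(x,u) - \F(x,u) = (g_0(x)-\g_0(x)) + (G(x)-\G(x))u$ and $u$ ranges over the compact set $\mathbb{U}$, the control can be absorbed into the constants, and it suffices to bound, in $C_b(\Omega)$, the difference between each component $g_j$ of $[g_0 \mid G]$ and its kEDMD counterpart $g_j^\varepsilon$, $j\in[0:m]$. For this I would insert the intermediate surrogate $\bar g_j$ obtained from the \emph{exact} nodal matrices $[g_0(x_i)\mid G(x_i)]$ (i.e.\ the idealized case $\varepsilon_c=0$) and split
\begin{align*}
    \|g_j - g_j^\varepsilon\|_{C_b(\Omega)} \le \underbrace{\|g_j - \bar g_j\|_{C_b(\Omega)}}_{\text{interpolation error}} + \underbrace{\|\bar g_j - g_j^\varepsilon\|_{C_b(\Omega)}}_{\text{regression error}}.
\end{align*}

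The first term is a pure kernel-interpolation error: $\bar g_j$ is the kernel interpolant of $g_j$ on $\mathcal{X}$ and hence reproduces $g_j$ at every node. Invoking the regularity of $g_0, G$ placing them in the Wendland native space (equivalently the fractional Sobolev space $H^{(n+1)/2+k}$, cf.\ Remark~\ref{rem:wendland}) and the associated sampling inequality, I would derive the refined, distance-weighted estimate bounding the pointwise error by a constant times $h_\mathcal{X}^{k-1/2}\dist(x,\mathcal{X})$. The additional factor $\dist(x,\mathcal{X})$ — one power of $h_\mathcal{X}$ relative to the global bound $h_\mathcal{X}^{k+1/2}$ of Theorem~\ref{thm:koehne} — is precisely what renders the estimate proportional and encodes that the error vanishes on $\mathcal{X}$; this produces the first term $C_1\varepsilon_{h_\mathcal{X}}\dist(x,\mathcal{X})$.

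For the second term I would first control the local regression error. The matrices $H_i$ are obtained from triplets $x_{ij}\in\mathcal{B}_{\varepsilon_c}(x_i)$ with $x_{ij}^+=F(x_{ij},u_{ij})$; since $g_0,G$ are $L_{g_0}$-, $L_G$-Lipschitz, the exact affine model varies by $\mathcal{O}(\varepsilon_c)$ across each cluster, so solving the least-squares problem via $U_i^\dagger$ yields $\|H_i-[g_0(x_i)\mid G(x_i)]\|\lesssim \|U_i^\dagger\|\,\varepsilon_c$. This nodal perturbation must then be propagated through the coefficient map $\widehat K_j=K_\mathcal{X}^{-1}K_{\tilde g_j(\mathcal{X})}K_\mathcal{X}^{-1}$ and the evaluation $x\mapsto\sum_i(\cdot)_i\Phi_{x_i}(x)$. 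Bounding the surrogate in $C_b(\Omega)$ by its RKHS norm via the reproducing property and $\|\Phi_{x_i}\|_{\mathbb H}=\Phi_{n,k}^{1/2}(0)$, using $\|\sum_i(K_\mathcal{X}^{-1}v)_i\Phi_{x_i}\|_{\mathbb H}^2=v^\top K_\mathcal{X}^{-1}v$ to extract the factor $(\max_{\|v\|_\infty\le1}v^\top K_\mathcal{X}^{-1}v)^{1/2}$, and accounting for the remaining $K_\mathcal{X}^{-1}$ in $\widehat K_j$, assembles the constant $c$ together with the factor $\|K_\mathcal{X}^{-1}\|$ and gives the second term $C_2 c\|K_\mathcal{X}^{-1}\|\varepsilon_c$. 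Summing the two contributions over the finitely many components yields~\eqref{eq:prop error:contr}.

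I expect the main obstacle to be the interpolation step, namely upgrading the plain bound to the distance-weighted form $h_\mathcal{X}^{k-1/2}\dist(x,\mathcal{X})$: this requires a localized sampling/Taylor argument that tracks the distance to the nearest node rather than only the global fill distance. A secondary difficulty is the bookkeeping in the regression step, where the possibly ill-conditioned map $K_\mathcal{X}^{-1}$ enters twice and the constant $c$ and the $\|K_\mathcal{X}^{-1}\|$-dependence must be extracted cleanly and uniformly in $(x,u)\in\Omega\times\mathbb{U}$.
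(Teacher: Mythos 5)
Your proposal follows essentially the same route as the paper's proof: the same decomposition into a distance-weighted kernel-interpolation error for the idealized exact-node surrogate plus a cluster-regression perturbation (nodal error $\lesssim \|U_i^\dagger\|\varepsilon_c$ propagated through $K_\mathcal{X}^{-1}K_{\tilde g_j(\mathcal{X})}K_\mathcal{X}^{-1}$ and the RKHS evaluation, yielding the constant $c$ and the factor $\|K_\mathcal{X}^{-1}\|$), combined via the control-affine structure and coordinate observables. The distance-weighted interpolation estimate you flag as the main obstacle is precisely what the paper gets by invoking \cite[Theorem~11.17]{Wend04} with multiindex $|\alpha|=1$, so no new localized Taylor argument needs to be developed.
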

\begin{proof}
    In a first step, completely analogous to \cite[Proof of Theorem 4.3]{bold2024kernel}, we obtain 
    \begin{equation*}\label{e:step1a}
        |H_{pq}(x_i) - (H_i)_{pq}| \leq \sqrt{2 d_i}(L_{g_0} + L_G \max \{ \|u\|_\infty : u \in \mathbb{U} \}) \| U_i^\dagger \| \cdot \varepsilon_c 
    \end{equation*}
    for $i \in [1:d]$, where 
    $H:\R^n \to \R^{n \times m}$ is defined by $H(x)=[g_0(x)\,|\,G(x)]$.
    Due to uniform continuity of the kernel~$\k$ on
    $\overline{\Omega}\times \overline{\Omega}$, we get the estimate
    \begin{align}\label{eq:est:K}
        \|K_{\tilde{g}_j(\mathcal{X})} - K_{{g}_j(\mathcal{X})}\| &\leq {c_\k} [ L_{g_0} + L_G \max\{\|u\|_\infty : u\in\mathbb{U}\} ] \cdot \max \{ \sqrt{2 d_i} \| U_i^\dagger\| \mid i \in [1:d] \} \varepsilon_c 
    \end{align}
    for a constant ${c_\k}\geq 0$ depending on the continuity modulus of~$\k$. 
    Consequently, this implies by inserting~\eqref{eq:est:K}
    \begin{align*}
        \| \widehat{K}_j - K_\mathcal{X}^{-1}K_{g_j(\mathcal{X})}K_\mathcal{X}^{-1}\| &= \| K_\mathcal{X}^{-1}(K_{\tilde{g}_j(\mathcal{X})} - K_{g_j(\mathcal{X})})K_\mathcal{X}^{-1}\|\\
        &\leq \|K_\mathcal{X}^{-1}\|^2 \|K_{\tilde{g}_j(\mathcal{X})} - K_{{g}_j(\mathcal{X})}\|\\
        &\leq c_\k \tilde{c} \max_{i \in [1:d]} \| U_i^\dagger\| \|K_\mathcal{X}^{-1}\|^2 \varepsilon_c 
    \end{align*}
    with $\tilde{c} = [ L_{g_0} + L_G \max\{\|u\|_\infty : u\in\mathbb{U}\} ] \cdot \max \{ \sqrt{2 d_i} \mid i \in [1:d] \}$.
    
    With \cite[Theorem 11.17]{Wend04} with multiindex~$|\alpha| = 1$, we get the bound
    \begin{align*}
        |(K_\mathcal{X}^{-1} K_{g_j(\mathcal{X})} K_\mathcal{X}^{-1}\varphi_\mathcal{X})^\top\textbf{k}_\mathcal{X}(x) - \varphi \circ g_j(x) | \leq \hat{c}h_\mathcal{X}^{k-1/2}\|\varphi\|_{\mathbb{H}} \ {\operatorname{dist}(x,\mathcal{X})}
    \end{align*}
    with $\hat{c}\geq 0$ for all $\varphi \in \mathbb{H}$
    and $\textbf{k}_\mathcal{X} = (\Phi_{x_1}, \dots, \Phi_{x_d})^\top$. Thus, by the triangle inequality, we obtain a bound on the full error:
    \begin{align*}
        |(\widehat{K}_j\varphi_\mathcal{X})^\top\textbf{k}_\mathcal{X}(x)  - \varphi \circ g_j(x) |
        & \leq \Bigg( \hat{c}h_\mathcal{X}^{k-1/2}\operatorname{dist}(x,\mathcal{X}) \\
        &\hphantom{\quad} 
        + c_\k \tilde c \max_{i \in [1:d]} \| U_i^\dagger\| \ \Phi_{n,k}^{1/2}(0) 
        \max_{\| v \|_\infty \leq 1} \|v\|_{K_\mathcal{X}^{-1}}  \|K_\mathcal{X}^{-1}\|  \varepsilon_c 
        \Bigg) \|\varphi\|_{\mathbb{H}},
    \end{align*}
    where the norm~$\|v\|_{K_\mathcal{X}^{-1}}^2$ is given by $v^\top K_\mathcal{X}^{-1}v$. 
    Last, choosing the coordinate functions as observables, we may from now on assume that $\varphi$ is linear. Thus,
    \begin{align*}
        \varphi(x^+) = \varphi(g_0(x) + G(x)u) = \varphi(g_0(x)) + \sum_{j=1}^m\varphi(g_j(x))u_i
    \end{align*}
    and hence, we get \eqref{eq:prop error:contr} with $C_1 = \hat{c}\max_{i\in [1:n]}\|x_i\|_\mathbb{H}$ and $C_2 = c_\k\tilde{c}\max_{i\in [1:n]}\|x_i\|_\mathbb{H}$,
    where $x_i$, $i\in [1:n]$ denotes the $i$-th coordinate map.
\end{proof}

As can be seen in Theorem~\ref{thm:control_main},the approximation quality depends on the fill distance~$h_\mathcal{X}$ of the grid~$\mathcal{X}$ of cluster points and on the cluster radius $\varepsilon_c > 0$. The choice of the virtual observation points not only influences the first term of the error bound via the fill distance, but also $\|K_\mathcal{X}^{-1}\|$ and $c$ in the second term. Depending on the choice of $\mathcal{X}$, the cluster radius~$\varepsilon_c$ has to be chosen sufficiently small to compensate for $C_2c\|K_\mathcal{X}^{-1}\|$.

Given an exact approximation of $g_0$ and $G$ at the grid points in $\mathcal{X}$, i.e., $\varepsilon_c = 0$, the error bound~\eqref{eq:prop error:contr} is proportional to the distance to the grid points. If, in addition, e.g., $x_1=0$ holds, then the approximation error~\eqref{eq:prop error:contr} is bounded by
\begin{align*}
    C \varepsilon_{h_\mathcal{X}} \operatorname{dist}(x, \mathcal{X}) \leq C\varepsilon_{h_\mathcal{X}} \|x\|.
\end{align*}
\begin{remark}
    If a control-affine continuous-time system 
    \begin{align*}
        \dot{x}(t) := \bar{f}(x(t),u(t)) := \bar{g}_0(x(t)) + \bar{G}(x(t)) u(t)
    \end{align*} 
    is given, a corresponding discrete-time sampled-data system with zero-order hold and time step $\Delta t > 0$
    \begin{equation}\nonumber
        x^+ = x + \int_0^{\Delta t} \bar{g}_0(x(s;x,u))\,\mathrm{d}s + \int_0^{\Delta t} \bar{G}(x(s;x,u))\bar{u} \,\mathrm{d}s
    \end{equation}
    can be derived with $u(s) \equiv \bar{u} \in \mathbb{U}$, assuming existence and uniqueness of the solution $x(\cdot;x^0,u)$ on $[0,\Delta t]$. The control-affine property is approximately preserved for the Koopman operator resulting in an additional error of magnitude~$\mathcal{O}(\Delta t^2)$, see \cite[Rem.~4.2]{bold2024kernel}. 
\end{remark}

\section{MPC via kernel EDMD}

\noindent In this section, we propose kEDMD-MPC, which leverages the data-driven surrogate~\eqref{eq:dynamics:kEDMD:control} 
in the optimization step.

First, we define admissibility of sequences of control values of length~$N$ taking into account the set~$\mathbb{X} \subseteq \Omega$ representing the state constraints. 
In the following, the set $\mathbb{X}$ with $0 \in \operatorname{int}(\mathbb{X})$ is assumed to be convex and compact.

\begin{definition}\label{def:admissibility}
    A control sequence $u = (u(k))_{k=0}^{N-1} \subset \mathbb{U}$ of length~$N$ is said to be \textit{admissible} for state $\hat{x} \in \mathbb{X}$, if $x_u(k) \in \mathbb{X}$ holds for all $k \in [1:N]$, where 
    \[
        x_u(k) = F(x_u(k-1),u(k-1))
    \]
    using the dynamics~\eqref{eq:dynamics:control} and $x_u(0) = \hat{x}$. 
    For $\hat{x} \in \mathbb{X}$, the set of admissible control sequences is denoted by~$\mathcal{U}_N(\hat{x})$. 
    If, for $u = (u(k))_{k=0}^\infty$ and $(u(k))^{N -1}_{k = 0} \in \mathcal{U}_N(\hat{x})$ holds for all $N \in \N$, 
    we write $u \in \mathcal{U}_\infty(\hat{x})$.
\end{definition}
If the surrogate~$\F$ is considered, we change Definition~\ref{def:admissibility} by tighening the state constraint, i.e., imposing
\begin{equation}\label{def:admissiblity:varepsilon}
    x_u^\varepsilon(k) = \F(x_u^\varepsilon(k-1),u(k-1)) \in \mathbb{X} \ominus \mathcal{B}_{k \varepsilon}(0),
\end{equation}
where $\ominus$ denotes the usual Pontryagin set difference. 
We call these control sequences admissible for $\hat{x} \in \mathbb{X}$ w.r.t.\ the surrogate~\eqref{eq:dynamics:kEDMD:control} denoted by $u \in \mathcal{U}_N^\varepsilon(\hat{x})$.

In the following, we will use the quadratic stage costs
\begin{align}\label{eq:stagecosts}
    \ell(x, u) = \|x\|_Q^2 + \|u\|_R^2 = x^\top Q x + u^\top R u,
\end{align} 
symmetric positive definite $Q \in \R^{M \times M}$, $R\in \R^{m \times m}$.
The proposed kEDMD-MPC method is summarized in Algorithm~\ref{alg:kEDMD-MPC}.
\begin{algorithm}[htb]
    \caption{kEDMD-MPC}\label{alg:kEDMD-MPC}
    \raggedright
    \smallskip\hrule
    \smallskip
    {\it Input:} Horizon $N \in \N$, 
    surrogate~$\F$, constraints~$\mathbb{U}$, $\mathbb{X}$. 
    \smallskip\hrule
    \medskip
    \textit{Initialisation}: Set $k = 0$.\\[2mm]
    \noindent\textit{(1)} Measure current state $x_{\mu_N^\varepsilon}(k)$ and set $\hat{x} = x_{\mu_N^\varepsilon}(k)$.\\[1mm]
    \noindent\textit{(2)} Solve the optimal control problem~
    \begin{align}\label{eq:OCP}\tag{OCP}
    \begin{split}
        \min_{\bar{u} \in \mathcal{U}_N^\varepsilon(\hat{x})} \quad & \sum\nolimits_{i = 0}^{N-1} \ell(x_{\bar{u}}^\varepsilon(i), \bar{u}(i)) \\
        \text{s.t.} \quad & x_{\bar{u}}^\varepsilon(0) = \hat{x} \text{ and for $i \in [1:N-1]$} \\
        & x_{\bar{u}}^\varepsilon(i+1) = \F(x_{\bar{u}}^\varepsilon(i), \bar{u}(i))
    \end{split}
    \end{align}
    \hspace*{5mm} to obtain the
    optimal control sequence~$(\bar{u}^\star(i))_{i = 0}^{N - 1}$. \\[1mm]
    \noindent\textit{(3)} Apply the MPC feedback law~$\mu_N^\varepsilon(x(k)) = \bar{u}^\star({0})$ at the\\
    \hspace*{5mm} plant, shift $k = k + 1$, and go to Step~(1).
    \smallskip\hrule
\end{algorithm}

A common method to ensure closed-loop stability is the use of suitable terminal ingredients, see \cite{magni2001stabilizing}. However, the design of terminal ingredients can be a demanding task in the nonlinear framework. Another option to achieve stability without terminal ingredients relies on cost controllability, a stabilizability condition, in combination with a sufficiently long prediction horizon, see, e.g., \cite{boccia2014stability}.

The (optimal) value function~$\V_N: \mathbb{X} \rightarrow \mathbb{R}_{\geq 0} \cup \{ \infty \}$ for the MPC algorithm associated to the optimal control problem~\eqref{eq:OCP} is defined by 
\begin{align*}
    \V_N(\hat{x}) := \inf_{u \in \U(\hat{x})} \J_N(\hat{x},u)
\end{align*} 
with $\J_N(\hat{x},u) = \sum_{k=0}^{N - 1} \ell(x^\varepsilon_u(k),u(k))$, where $x^\varepsilon_u(0) = \hat{x}$.

\section{Stability of kEDMD-MPC}

\noindent In this section, we prove practical asymptotic stability of the origin w.r.t.\ the closed-loop dynamics of kEDMD-MPC as defined in Algorithm~\ref{alg:kEDMD-MPC}.

\begin{definition}
\label{def:PAS}
    Let $\mu^\varepsilon_N$ be the feedback law defined in Algorithm~\ref{alg:kEDMD-MPC} and EDMD admissibility of a control sequence $(u(k))_{k = 0}^{N - 1}$ at $\hat{x}$ be defined by~\eqref{def:admissiblity:varepsilon}. 
    The origin is said to be $\varepsilon_0$-\textit{practically asymptotically stable} (PAS) on~$S \subseteq \mathbb{X}$ if there exist $\beta \in \mathcal{KL}$ such that: $\forall\,r>0$ $\exists\,\varepsilon_0 > 0$ such that, for each fill distance~$h_\mathcal{X}$ and cluster radius~$\varepsilon_c$ such that \eqref{eq:prop error:contr} holds with $\varepsilon \in (0,\varepsilon_0]$, \eqref{eq:OCP} is feasible for all $\hat{x} \in S$ and the MPC closed-loop solution $x_{\mu^\varepsilon_N}(\cdot)$ generated by $x_{\mu^\varepsilon_N}(0) = \hat{x}$ and 
    \begin{align}\label{eq:ex_cl}
        x_{\mu^\varepsilon_N} (k+1) = F(x_{\mu^\varepsilon_N}(k),\mu^\varepsilon_N(x_{\mu^\varepsilon_N}(k)))
    \end{align}
     satisfies $x_{\mu_N}^\varepsilon(k) \in S$ and 
        $\| x^{\varepsilon}_{\mu_N}(k) \| \leq \max \{\beta(\| \hat{x} \|,k),r\}$ 
    for all $k \in \mathbb{N}_0$.
\end{definition}

In the following, we prove PAS of the origin via \cite[Theorem~11.10]{grune2017nonlinear}, which serves as a flexible tool to verify PAS subject to bounded model perturbations. 
With the same strategy, PAS for EDMD-based MPC was proven in \cite{bold2024data} under a restrictive invariance condition on the dictionary, which is not needed for kEDMD-MPC as shown in the following.

The application of \cite[Theorem~11.10]{grune2017nonlinear} requires the following 
key assumptions: 
\begin{itemize}
    \item[1)] Continuity of $\F$ uniform in $u\in \mathbb{U}$ on~$\Omega$, i.e., for some $\varepsilon_0 > 0$ there is $\omega \in \mathcal{K}$ such that 
    \begin{align*}
        \|\F(x,u) - \F(y,u)\| \leq \omega(\|x-y\|) 
    \end{align*}
    for all $x, y \in \Omega$, $u\in \mathbb{U}$ and $\varepsilon \in (0,\varepsilon_0]$.
    \item[2)] Relaxed Lyapunov 
    inequality: 
        For a forward invariant set~$S \subset \mathbb{X}$ w.r.t.\ $\F$, there exist $\varepsilon_0 > 0$ and $\alpha \in (0, 1]$ such that
        \begin{align*}
             \V_N(\F(x, \mu^\varepsilon_N(x))) \leq \V_N(x) - \alpha \ell(x, \mu^\varepsilon_N(x)).
        \end{align*}
        for all $\varepsilon \in (0, \varepsilon_0]$ and $x \in S$, 
    \item[3)] Uniform continuity of $\V_N$ and existence of $\alpha_1, \alpha_2, \alpha_3 \in \mathcal{K}_\infty$ such that $\alpha_1(\|x\|) \le {V}_N^\varepsilon(x) \le \alpha_2(\|x\|)$ and $\ell(x, u) \ge \alpha_3(\|x\|)$ for all $x \in S$ and $u \in \mathbb{U}$.
\end{itemize}

In Lemma~\ref{lem:uniformcont}, we show uniform continuity of the surrogate~$\F$ for cluster radius~$\varepsilon_c = 0$, i.e., Property~1. In Proposition~\ref{prop:cost:controllability}, we leverage the proportional bound of Theorem~\ref{thm:control_main} to show that cost controllability is preserved in the kEDMD surrogate. 
This allows to infer Property~2) for a sufficiently long prediction horizon in Theorem~\ref{thm:PAS}. 
Leveraging these findings enables us to ensure the remaining Property~3) and, thus, PAS analogously to \cite[Theorem~10]{bold2024data}.

\begin{lemma}[Uniform continuity of surrogate~$\F$]\label{lem:uniformcont}
Let {$k \geq 1$} 
and $F \in C_b^{\ceil{\sigma_{n,k}}}(\Omega;\R^{n})$ with $\sigma_{n,k} := \frac{n+1}{2} + k$ and $\varepsilon_c=0$. Then, there exist a constant $h_0 > 0$ such that, if the fill distance~$h_\mathcal{X}$ satisfies $h_\mathcal{X} \leq h_0$, the surrogate~$\F$ defined by~\eqref{eq:dynamics:kEDMD:control} is uniformly continuous in~$x$ w.r.t.\ $u \in \mathbb{U}$ with modulus of continuity~$\omega(r) = (C_{\nabla \F} h_\mathcal{X}^{k-1/2} + \widetilde{C}) r$ for constant~$C_{\nabla \F}>0$ and $\widetilde{C} = \sup\limits_{z \in \Omega} \|\nabla_x F(z, u)\|$, i.e.,
    \begin{align*}
        \| \F(x, u) - \F(y, u)\| \le \omega(\|x - y\|)
    \end{align*}
    holds for all $x, y \in \Omega$ and $u \in \mathbb{U}$. 
\end{lemma}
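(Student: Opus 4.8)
The plan is to exploit that the prescribed modulus $\omega(r)=(C_{\nabla\F}h_\mathcal{X}^{k-1/2}+\widetilde C)r$ is linear in $r$, so it suffices to establish a Lipschitz estimate for $\F(\cdot,u)$, uniform in $u\in\mathbb{U}$, with Lipschitz constant $C_{\nabla\F}h_\mathcal{X}^{k-1/2}+\widetilde C$. The natural route is through the Jacobian: I would first show that $\F(\cdot,u)$ is continuously differentiable and then bound $\sup_{z\in\Omega}\|\nabla_x\F(z,u)\|$, since the mean value inequality converts such a gradient bound into the estimate $\|\F(x,u)-\F(y,u)\|\le(\sup_{z}\|\nabla_x\F(z,u)\|)\,\|x-y\|$. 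I would then split $\sup_{z}\|\nabla_x\F(z,u)\|\le\widetilde C+\sup_{z}\|\nabla_x\F(z,u)-\nabla_xF(z,u)\|$, which reduces the lemma to a gradient error bound of order $h_\mathcal{X}^{k-1/2}$ between the surrogate and the true dynamics.

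For the differentiability step, I would use that for $k\ge1$ the Wendland kernel is $2k$-times continuously differentiable, so each canonical feature $\Phi_{x_i}$ is at least $C^2$; as $\F(\cdot,u)$ is a finite linear combination of the $\Phi_{x_i}$ with coefficients depending affinely on $u$, it is $C^1$ in $x$ with continuous Jacobian, which is bounded on the compact set $\overline\Omega$. The mean value inequality then applies along connecting segments; where $\Omega$ fails to be convex I would invoke convexity of the relevant state-constraint set (or the fact that the features, and hence $\F$, are defined on all of $\R^n$) to keep the connecting path in the domain of definition.

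The crux is the first-order (gradient) error bound. Since $\varepsilon_c=0$ the micro-level regression is exact, hence $\widehat K_j=K_\mathcal{X}^{-1}K_{g_j(\mathcal{X})}K_\mathcal{X}^{-1}$, and the $\ell$-th component $\F(\cdot,u)_\ell$ coincides with the interpolation-type object $(K_\mathcal{X}^{-1}K_{g_0(\mathcal{X})}K_\mathcal{X}^{-1}(x_\ell)_\mathcal{X})^\top\mathbf{k}_\mathcal{X}+\sum_{j=1}^m(K_\mathcal{X}^{-1}K_{g_j(\mathcal{X})}K_\mathcal{X}^{-1}(x_\ell)_\mathcal{X})^\top\mathbf{k}_\mathcal{X}\,u_j$ already analyzed in the proof of Theorem~\ref{thm:control_main}. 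I would apply Wendland's sampling inequality \cite[Theorem~11.17]{Wend04} with multi-index $|\alpha|=1$ to the coordinate observable $\varphi=x_\ell$ to obtain directly $\|\nabla_x(\F(\cdot,u)_\ell-F(\cdot,u)_\ell)\|_{L^\infty(\Omega)}\le \hat c\,h_\mathcal{X}^{k-1/2}\|x_\ell\|_\mathbb{H}(1+\|u\|)$. This is precisely the first-order estimate which, in Theorem~\ref{thm:control_main}, was combined with the vanishing of the error at the grid points to generate the $\dist(x,\mathcal{X})$ factor; here I use the derivative bound itself, so no distance factor arises and the exponent $k-1/2$ reflects the loss of one power of $h_\mathcal{X}$ per derivative relative to the value bound of Theorem~\ref{thm:koehne}. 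Summing over $\ell\in[1:n]$ and absorbing the $u$-dependence into the constant via compactness of $\mathbb{U}$ yields $\sup_{z\in\Omega}\|\nabla_x\F(z,u)-\nabla_xF(z,u)\|\le C_{\nabla\F}h_\mathcal{X}^{k-1/2}$ uniformly in $u\in\mathbb{U}$, valid once $h_\mathcal{X}\le h_0$ so that the sampling inequality holds; the hypothesis $F\in C_b^{\ceil{\sigma_{n,k}}}$ enters to ensure the Sobolev regularity required by \cite[Theorem~11.17]{Wend04} and the finiteness of the norms entering $C_{\nabla\F}$.

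Combining the three steps gives $\|\F(x,u)-\F(y,u)\|\le(\widetilde C+C_{\nabla\F}h_\mathcal{X}^{k-1/2})\|x-y\|=\omega(\|x-y\|)$ for all $x,y\in\Omega$ and $u\in\mathbb{U}$. I expect the main obstacle to be the gradient estimate, namely establishing the first-order sampling inequality with the $h_\mathcal{X}^{k-1/2}$ rate uniformly in $u$ while tracking the correct smoothness assumptions on $F$; the differentiability and mean value steps are comparatively routine, the only delicate secondary point being the convexity issue in the mean value inequality.
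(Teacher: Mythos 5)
Your proposal is correct and follows essentially the same route as the paper's proof: reduce to a Jacobian bound via the mean value theorem, split $\nabla_x \F$ into $\nabla_x F$ (bounded by $\widetilde{C}$ via compactness) plus the Jacobian error, and control the latter at rate $h_\mathcal{X}^{k-1/2}$ by applying \cite[Theorem~11.17]{Wend04} with $|\alpha|=1$ to each $\g_j - g_j$ using the control-affine structure and $\varepsilon_c = 0$. Your explicit attention to the convexity issue in the mean value step and the clean absorption of the $u$-dependence via compactness of $\mathbb{U}$ are points the paper glosses over, but they do not change the argument.
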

\textbf{Proof.}        
Let $x,y \in \Omega$ and $u \in \mathbb{U}$ be given. Then, for all $i \in [1:n]$, it holds by using Taylor's theorem 
    \begin{align*}
    \F(x, u) - \F(y, u) &= [\F(y, u) + (x - y)^\top\nabla_x \F(\xi,u)] - \F(y, u) \\
    &= (x - y)^\top \nabla_x \F(\xi,u)
    \end{align*}
for a $\xi \in \{\tilde{\xi} \in \R^n \mid \tilde{\xi} = (t - 1)x + t y, \ t \in [0, 1]\}$.
Consequently, we may estimate
\begin{align*}
    \|\F(x, u) - \F(y, u)\| &\leq \|x - y\| \|\nabla_x \F(\xi, u) \pm \nabla_x F(\xi, u)\| \\
    & \leq \|x - y\|\left(\|\nabla_x \F(\xi, u) - \nabla_x F(\xi, u)\| + \|\nabla_x F(\xi, u)\|\right).
\end{align*}
To compute a bound for $\|\nabla_x \F(\xi, u) - \nabla_x F(\xi, u)\|$, we first leverage the control-affine structure and, then, 
apply 
{\cite[Theorem 11.17]{Wend04}
with multiindex~$|\alpha| = 1$} invoking $\varepsilon_c = 0$ to obtain
\begin{align*}
    \|\nabla_x \F(\xi, u) - \nabla_x F(\xi, u)\| & \le \|\nabla_x \g_0(\xi) - \nabla_x g_0(\xi)\| + \sum\nolimits_{i = 1}^m \|\nabla_x \g_i(\xi) - \nabla_x g_i(\xi)\| \|u\|_\infty \\
    & \le ( C_{\nabla_x g_0} + \|u\|_\infty \sum_{i = 1}^m C_{\nabla_x g_i} ) \cdot h_\mathcal{X}^{k-1/2} =: C_{\nabla \F} h_\mathcal{X}^{k-1/2}
\end{align*}
for all $\xi \in \Omega$, where $g_i$ stands for the $i$-th column of $G$. 
Since the function~$F$ is continuously differentiable w.r.t.\ its first argument on the compact set~$\Omega$, we have $\| \nabla_x F(\xi, u)\| \leq \widetilde{C} \in (0,\infty)$. 
Overall, we have the following inequality
\begin{align*}
     \|\F(x, u) - \F(y, u)\| &\leq \|x - y\| \left(C_{\nabla_x \F} h_0^{k-1/2} + \widetilde{C} \right) = \omega (\|x - y\|). 
\end{align*}
\qed

The continuity in Lemma~\ref{lem:uniformcont} is uniform in the approximation bound~$\varepsilon>0$. This is in contrast to finite-element dictionaries, see~\cite{schaller2023towards}, where the derivative of the ansatz functions increases for decreasing mesh size.

In Proposition~\ref{prop:cost:controllability}, we show that cost controllability is preserved for the data-driven surrogate for $\varepsilon_c = 0$, i.e., cluster radius zero. This allows to deduce the relaxed dynamic programming inequality. 
\begin{proposition}[Cost controllability]\label{prop:cost:controllability}
    For $C, \varepsilon_0 > 0$, let the data requirements~\eqref{eq:ass:data} and the $\varepsilon$-bound~\eqref{eq:prop error:contr} depending on fill distance~$h_{\mathcal{X}}$ and $\varepsilon_c = 0$ hold, i.e., we have the proportional error bound
    \begin{align}\label{eq:error:bound:proportional}
        \|F(x,u) - \F(x,u)\|_\infty & \leq C \varepsilon_{h_\mathcal{X}} \operatorname{dist}(x, \mathcal{X}).
    \end{align}    
    for all $\varepsilon \in (0,\varepsilon_0]$.
    Suppose that the system~$F$ governed by~\eqref{eq:dynamics:control} with stage cost~\eqref{eq:stagecosts} is \textit{cost controllable} on the set of $\varepsilon$-admissible states $\mathbb{X}^\varepsilon := \{ x \mid \mathcal{U}_\infty^\varepsilon(x) \neq \emptyset\}$, i.e., there exists a monotonically increasing and bounded sequence $(B_k)_{k \in \N}$ such that, for every $\hat{x} \in \mathbb{X}^\varepsilon$, $\exists\,u \in \mathcal{U}_{\infty}^\varepsilon(\hat{x})$ satisfying the growth bound\footnote{The first inequality directly follows from the definition of the value function and is only stated to link~\eqref{eq:growthbound} to cost controllability as in~\cite{GrunPann10}, see~\cite{Wort12} and the references therein.}
    \begin{align} \label{eq:growthbound}
        V_k(\hat{x}) \le J_{\bar{N}}(\hat{x}, u) \le B_{\bar{N}} \ell^*(\hat{x}) \qquad\forall\,\bar{N} \in \N
    \end{align}
    with $\ell^*(\hat{x}) := \inf_{u\in \mathbb{U}} \ell(\hat{x}, u)$.
    Then, cost controllability holds for the $\varepsilon$-surrogate~$\F$ defined by~\eqref{eq:dynamics:kEDMD:control} on $\mathbb{X}^\varepsilon$, i.e., existence of a monotonically increasing and bounded sequence $(\B_k)_{k \in \N}$ such that Inequality~\eqref{eq:growthbound} holds for $\V_{\bar{N}}$, $\J_{\bar{N}}$, $\mathcal{U}_{\bar{N}}^\varepsilon$ instead of $V_{\bar{N}}$, $J_{\bar{N}}$, $\mathcal{U}_{\bar{N}}$, respectively. 
    Moreover, we have $\B_{\bar{N}} \rightarrow B_{\bar{N}}$ for $h_\mathcal{X} \rightarrow 0$ for all $k \in \N$. 
\end{proposition}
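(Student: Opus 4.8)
The plan is to transfer cost controllability from the true system to the surrogate by reusing the control sequence furnished by the hypothesis and controlling the resulting cost discrepancy through the proportional error bound~\eqref{eq:error:bound:proportional}. Fix $\hat x \in \mathbb{X}^\varepsilon$ and let $u \in \mathcal{U}_\infty^\varepsilon(\hat x)$ be the control from the assumed cost controllability of $F$, so that $J_{\bar N}(\hat x, u) \le B_{\bar N}\ell^*(\hat x)$ for all $\bar N$. Since $u \in \mathcal{U}_\infty^\varepsilon(\hat x)$, the same sequence is admissible for the surrogate, whence the value-function lower bound $\V_{\bar N}(\hat x) \le \J_{\bar N}(\hat x, u)$ holds by definition of the infimum. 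It therefore remains to upper-bound $\J_{\bar N}(\hat x, u)$ by comparing it stage-by-stage with $J_{\bar N}(\hat x, u)$.

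Next I would estimate the deviation $e(k) := \|x_u^\varepsilon(k) - x_u(k)\|$ between the surrogate and true trajectories driven by the same $u$. Writing the one-step difference as $\F(x_u^\varepsilon(k),u(k)) - F(x_u(k),u(k))$ and inserting $\pm\,\F(x_u(k),u(k))$, the uniform continuity of $\F$ from Lemma~\ref{lem:uniformcont} bounds the first part by $L_\F\, e(k)$ with $L_\F = C_{\nabla\F}h_\mathcal{X}^{k-1/2} + \widetilde C$, while the proportional bound~\eqref{eq:error:bound:proportional} together with $0 = x_1 \in \mathcal{X}$, i.e.\ $\dist(x_u(k),\mathcal{X}) \le \|x_u(k)\|$, bounds the second part by $C\varepsilon_{h_\mathcal{X}}\|x_u(k)\|$. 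This yields the recursion $e(k+1) \le L_\F\, e(k) + C\varepsilon_{h_\mathcal{X}}\|x_u(k)\|$ with $e(0)=0$, which a discrete Gronwall argument solves. Crucially, the cost bound gives $\ell(x_u(k),u(k)) \le J_{\bar N}(\hat x, u) \le B_{\bar N}\|\hat x\|_Q^2$, hence $\|x_u(k)\| \le M\|\hat x\|$ with $M = (B_\infty \lambda_{\max}(Q)/\lambda_{\min}(Q))^{1/2}$, so that $e(k) \le \gamma_{\bar N}(h_\mathcal{X})\|\hat x\|$ where $\gamma_{\bar N}(h_\mathcal{X}) \to 0$ as $h_\mathcal{X} \to 0$ for each fixed $\bar N$ (recall $\varepsilon_{h_\mathcal{X}} = h_\mathcal{X}^{k-1/2}$ and $L_\F \to \widetilde C$).

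Then I would convert the trajectory deviation into a cost gap. Since $\ell$ is quadratic and the $\|u(k)\|_R^2$ terms cancel,
\[
    \big|\ell(x_u^\varepsilon(k),u(k)) - \ell(x_u(k),u(k))\big| = \big|\,\|x_u^\varepsilon(k)\|_Q^2 - \|x_u(k)\|_Q^2\,\big| \le \lambda_{\max}(Q)\,e(k)\,\big(\|x_u^\varepsilon(k)\| + \|x_u(k)\|\big),
\]
so summing and using the proportional bounds on $e(k)$ and $\|x_u(k)\|$ gives $|\J_{\bar N}(\hat x, u) - J_{\bar N}(\hat x, u)| \le \kappa_{\bar N}(h_\mathcal{X})\|\hat x\|^2$ with $\kappa_{\bar N}(h_\mathcal{X}) \to 0$ as $h_\mathcal{X}\to 0$. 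Because $\ell^*(\hat x) = \|\hat x\|_Q^2 \ge \lambda_{\min}(Q)\|\hat x\|^2$ (the infimum over $u$ being attained at $u=0 \in \mathbb{U}$), this reads $|\J_{\bar N}-J_{\bar N}| \le \kappa_{\bar N}(h_\mathcal{X})\,\lambda_{\min}(Q)^{-1}\ell^*(\hat x)$, and hence $\J_{\bar N}(\hat x, u) \le \B_{\bar N}\ell^*(\hat x)$ with $\B_{\bar N} := B_{\bar N} + \kappa_{\bar N}(h_\mathcal{X})/\lambda_{\min}(Q)$. Monotonicity and the convergence $\B_{\bar N}\to B_{\bar N}$ as $h_\mathcal{X}\to 0$ then follow from the corresponding properties of $(B_k)$ and from $\kappa_{\bar N}(h_\mathcal{X})\to 0$.

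The main obstacle is the boundedness of the new sequence $(\B_k)$ uniformly in $k$. The Gronwall constant $L_\F$ is essentially $\widetilde C = \sup_{z\in\Omega}\|\nabla_x F(z,u)\|$, which may exceed one for an open-loop unstable plant; then the naive bound on $e(k)$ grows geometrically in $k$, and so might $\kappa_{\bar N}(h_\mathcal{X})$. To keep $(\B_k)$ bounded one must exploit the decay of the reference trajectory encoded in the summability $\sum_k \|x_u(k)\|^2 \le M^2\|\hat x\|^2$ (again a consequence of the integrated cost bound), rather than the crude pointwise estimate $\|x_u(k)\| \le M\|\hat x\|$, and combine it carefully with the error recursion; alternatively, since the stability result only invokes the growth bound at the finite MPC horizon $N$, it suffices to establish boundedness on the relevant finite range of $k$, where each $\B_k$ is finite and the pointwise convergence $\B_k \to B_k$ is what drives the subsequent analysis. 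Either way, the proportional, state-dependent nature of the error bound is indispensable: it is precisely what ties the cost gap to $\ell^*(\hat x)$ and thereby preserves the multiplicative form~\eqref{eq:growthbound}, rather than producing an additive offset that would destroy cost controllability near the origin.
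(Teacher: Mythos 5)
Your proposal is correct and follows essentially the same route as the paper's proof: reuse the control sequence $u \in \mathcal{U}_\infty^\varepsilon(\hat{x})$ from cost controllability of $F$, derive a Gronwall-type recursion $e_{k+1} \le L\, e_k + C\varepsilon_{h_\mathcal{X}}\|x_u(k)\|$ using the proportional bound and $\operatorname{dist}(x,\mathcal{X}) \le \|x\|$ via $0 \in \mathcal{X}$, convert the trajectory deviation into a quadratic cost gap, and absorb it multiplicatively into $\ell^*(\hat{x})$ so that $\B_{\bar N} = B_{\bar N} + o(1)$ as $h_\mathcal{X} \to 0$ (your only cosmetic deviations are inserting $\pm\F(x_u(k),u(k))$ with the Lipschitz constant of $\F$ from Lemma~\ref{lem:uniformcont} instead of $\pm F(x_u^\varepsilon(k),u(k))$ with $L_F$, and using the pointwise bound $\|x_u(k)\| \le M\|\hat x\|$ instead of the paper's repeated invocation of the summed cost bound). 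The uniform-in-$k$ boundedness issue you flag honestly is in fact also unresolved in the paper's own proof, whose constants $c_1^\varepsilon, c_2^\varepsilon$ likewise grow geometrically with $N$ through $d = 2(L_F + C\varepsilon_{h_\mathcal{X}})^2$.
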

\noindent\textbf{Proof.} 
    Set $\bar{\lambda} = \max\{|\lambda| : \lambda \text{ eigenvalue of } R \text{ or } Q\}$ and $0 < \underline{\lambda} = \min\{|\lambda| : \lambda \text{ eigenvalue of } R \text{ or } Q\}$. 
    Let $N \in \mathbb{N}$, $\hat{x} \in \mathbb{X}^\varepsilon$, and $u \in \mathcal{U}_{N}^\varepsilon(\hat{x})$ satisfying the growth bound~\eqref{eq:growthbound} be given.
    Then, we have
    \begin{eqnarray}
        & & \ell(x_u^\varepsilon(k), u(k)) - \ell(x_u(k), u(k)) \label{eq:cost:controllability:proof0} \\
        & = & \| x_u^\varepsilon(k) \mp x_u(k) \|^2_Q + \|u(k)\|^2_R - \ell(x_u(k), u(k)) \nonumber \\
        & \leq & \bar{\lambda} \| x_u^\varepsilon(k) - x_u(k) \|^2 + 2 \bar{\lambda} \| x_u^\varepsilon(k) - x_u(k) \| \|x_u(k)\|. \nonumber
    \end{eqnarray}
    
    Then, using $\operatorname{dist}(x,\mathcal{X}) \leq \| x \|$ in view of $x_1 = 0 \in \mathcal{X}$,  the difference $e_{k+1} := \| x_u^\varepsilon(k + 1) - x_u(k + 1) \|$ is estimated using the triangular inequality by
    \begin{align}
        & e_{k + 1} \hspace*{-0.75mm}=\hspace*{-0.75mm} \| \F(x_u^\varepsilon(k), u(k)) \hspace*{-0.5mm}\pm\hspace*{-0.5mm} F(x_u^\varepsilon(k), u(k)) \hspace*{-0.5mm}-\hspace*{-0.5mm} {F}(x_u(k), u(k))\| \nonumber \\
        & \phantom{e_{k + 1}} \hspace*{-0.75mm} \leq C \varepsilon_{h_\mathcal{X}} \| x_u^\varepsilon(k) \pm x_u(k)\| + L_F\| x_u^\varepsilon(k) - x_u(k) \| \nonumber \\
        & \phantom{e_{k + 1}} \hspace*{-0.75mm} \leq C \varepsilon_{h_\mathcal{X}} \|x_u(k)\| + (L_F + C \varepsilon_{h_{\mathcal{X}}}) e_k \label{eq:cost:controllability:proof1} \\
        & \phantom{e_{k + 1}} \hspace*{-0.75mm} \leq C \varepsilon_{h_\mathcal{X}} \sum\nolimits_{i=0}^k (L_F+C \varepsilon_{h_{\mathcal{X}}})^{k-i} \|x_u(i)\|. \label{eq:cost:controllability:proof2}
    \end{align}
    Hence, using inequality~\eqref{eq:cost:controllability:proof1}, we get 
    \begin{eqnarray*}
        e_k^2 & \leq & 2 C^2 \varepsilon_{h_{\mathcal{X}}}^2 \| x_u(k - 1)\|^2 + 2 (L_F + C \varepsilon_{h_\mathcal{X}})^2 e^2_{k - 1} \\
        & \leq & \frac{ C^2 \varepsilon_{h_{\mathcal{X}}}^2}{\underline{\lambda}} \sum\nolimits_{i = 0}^{k - 1} [2 (L_F + C \varepsilon_{h_\mathcal{X}})^2]^{k - 1 - i} \ell(x_u(i), u(i)).
    \end{eqnarray*}
    Analogously, leveraging inequality~\eqref{eq:cost:controllability:proof2} yields
    \begin{align*}
        e_k \|x_u(k)\| & \leq C \varepsilon_{h_\mathcal{X}} \sum\nolimits_{i=0}^k (L_F+C \varepsilon_{h_{\mathcal{X}}})^{k-i} \|x_u(i)\| \|x_u(k)\|.
    \end{align*}
    Using the last two inequalities and 
    \begin{align*}
        2 \|x_u(i)\| \|x_u(k)\| & \leq \|x_u(i)\|^2 + \|x_u(k)\|^2
    \end{align*}
    in inequality~\eqref{eq:cost:controllability:proof0} yields, for $\ell(x_u^\varepsilon(k), u(k))$ summed up over $k \in [0 : N - 1]$, the estimate\footnote{Note that the first summands in $\J_N(\hat{x},u)$ and $J_N(\hat{x},u)$ coincide.}
    \begin{align*}
         \J_N(\hat{x}, u) &\leq J_N(\hat{x}, u) + \bar{\lambda} \Big( \sum\nolimits_{k = 0}^{N - 1} e_k^2 + 2 e_k\|x_u(k)\| \Big) \\
        &\leq  \left( B_N + c_1^\varepsilon C \varepsilon_{h_{\mathcal{X}}} \bar{\lambda}/\underline{\lambda} + c_2^\varepsilon C^2 \varepsilon_{h_{\mathcal{X}}}^2 \bar{\lambda}/\underline{\lambda} \right)\ell^*(\hat{x}) \\&=: B^\varepsilon_N\ell^*(\hat{x})
    \end{align*}
    with $c_1^\varepsilon = \sum_{k = 0}^{N - 1} B_k ( \sum_{i=0}^k d^{i}) (\sum_{i=0}^{N-1-k} d^i)$, $B_0 = 1$, and $c_2^\varepsilon = \sum_{k = 0}^{N-2} B_k \sum_{i=0}^{N-2-k} d^{i}$, where we have invoked the imposed cost controllability multiple times and used the abbreviation $d = 2(L_F+C \varepsilon_{h_{\mathcal{X}}})^2$. 
    Moreover, $B_N^\varepsilon \rightarrow B_N$ for $h_{\mathcal{X}} \rightarrow 0$ as claimed.
\qed

The following theorem combines the results of Theorem~\ref{thm:control_main}, Lemma~\ref{lem:uniformcont}, and Proposition~\ref{prop:cost:controllability} to deduce PAS of kEDMD-MPC. 
In essence, it states that if the nominal MPC controller asymptotically stabilizes the origin, the kEDMD-MPC controller ensures convergence to a neighborhood of the origin, whose size depends on the approximation accuracy~$\varepsilon$ (and thus on the fill distance~$h_\mathcal{X}$). The proof resembles the proof of Theorem~10 in~\cite{bold2024data}.
\begin{theorem}\label{thm:PAS}
     Let the assumptions of Proposition~\ref{prop:cost:controllability} holds and set $S$ to a level set of~$V_N^\varepsilon$ contained in $\mathbb{X}^\varepsilon$. 
     Let the prediction horizon $N$ be chosen such that $\alpha \in (0, 1)$ with
     \begin{align*}
         \alpha = \alpha_N := 1 - \frac{(B_2 - 1)(B_N - 1)\prod\nolimits_{i = 3}^N(B_i - 1)}{\prod\nolimits_{i = 2}^N B_i - (B_2 - 1)\prod\nolimits_{i = 3}^N(B_i - 1)}.
     \end{align*}
     Then the EDMD-MPC controller of Algorithm~\ref{alg:kEDMD-MPC} ensures $\varepsilon$-PAS of the origin on the set~$S$.
\end{theorem}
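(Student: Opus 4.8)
The plan is to verify the three structural assumptions listed immediately before the theorem and then invoke \cite[Theorem~11.10]{grune2017nonlinear}, which certifies PAS of a closed loop driven by a feedback designed on a surrogate model when the true plant differs from the surrogate only by a bounded perturbation. Here the nominal system underlying the MPC scheme is the surrogate~$\F$, the feedback~$\mu_N^\varepsilon$ is obtained from~\eqref{eq:OCP}, and the perturbation is the model mismatch $F - \F$, which Theorem~\ref{thm:control_main} bounds by~$\varepsilon$ on $\Omega \times \mathbb{U}$. The role of the constraint tightening~\eqref{def:admissiblity:varepsilon} via the Pontryagin difference $\mathbb{X} \ominus \mathcal{B}_{k\varepsilon}(0)$ is to guarantee recursive feasibility of~\eqref{eq:OCP} along the \emph{true} closed-loop trajectory~\eqref{eq:ex_cl}, so that the hypotheses of \cite[Theorem~11.10]{grune2017nonlinear} are met on the chosen level set~$S$.

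Properties~1) and~3) are obtained with little effort. Property~1), uniform continuity of~$\F$ in~$x$ with modulus~$\omega \in \mathcal{K}$ uniform in $u \in \mathbb{U}$, is exactly Lemma~\ref{lem:uniformcont} for $\varepsilon_c = 0$. For Property~3), positive definiteness of $Q$ and $R$ yields $\ell(x,u) \ge \underline{\lambda}\|x\|^2 =: \alpha_3(\|x\|)$, and since the first-stage running cost lower-bounds the value function, $\V_N(x) \ge \ell^*(x) \ge \underline{\lambda}\|x\|^2 =: \alpha_1(\|x\|)$. The upper bound follows from the preserved cost controllability of Proposition~\ref{prop:cost:controllability}, namely $\V_N(x) \le \B_N \ell^*(x) \le \B_N \bar{\lambda}\|x\|^2 =: \alpha_2(\|x\|)$, so that $\alpha_1,\alpha_2,\alpha_3 \in \mathcal{K}_\infty$. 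Uniform continuity of~$\V_N$ on~$S$ then follows from the uniform continuity of~$\F$ (Property~1) together with continuous dependence of finite-horizon trajectories and accumulated costs on the initial state, exactly as in \cite[Theorem~10]{bold2024data}.

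The crux is Property~2), the relaxed Lyapunov inequality for the surrogate scheme. First I would invoke Proposition~\ref{prop:cost:controllability} to transfer cost controllability from~$F$ to~$\F$, obtaining the growth bound~\eqref{eq:growthbound} with a monotone bounded sequence $(\B_k)_{k\in\N}$ for $\V_{\bar N},\J_{\bar N},\U$. I would then apply the controllability-based analysis of MPC without terminal ingredients (\cite{GrunPann10,boccia2014stability}), which, purely from such a growth bound, produces the relaxed dynamic programming inequality $\V_N(\F(x,\mu_N^\varepsilon(x))) \le \V_N(x) - \alpha\,\ell(x,\mu_N^\varepsilon(x))$ with relaxation parameter given by the displayed formula evaluated at the controllability constants. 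Since Proposition~\ref{prop:cost:controllability} additionally gives $\B_k \to B_k$ as $h_\mathcal{X} \to 0$ and the map $(B_i) \mapsto \alpha_N$ is continuous, choosing $N$ so that $\alpha_N \in (0,1)$ for the nominal constants and then $h_\mathcal{X}$ (hence~$\varepsilon$) small enough ensures that the surrogate relaxation parameter $\alpha_N^\varepsilon$ is likewise positive, so Property~2) holds uniformly in $\varepsilon \in (0,\varepsilon_0]$.

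The main obstacle I anticipate is precisely this reconciliation of the nominal constants~$B_i$ with the surrogate constants~$\B_i$ while simultaneously keeping the perturbation~$\varepsilon$ small, since the error bound~\eqref{eq:prop error:contr} and the sequence~$(\B_k)$ both depend on~$h_\mathcal{X}$. One therefore has to order the quantifiers carefully: the horizon~$N$ is fixed first to render~$\alpha_N$ positive for the nominal constants, and then, for a prescribed practical radius~$r>0$, the fill distance~$h_\mathcal{X}$ is shrunk both to keep~$\alpha_N^\varepsilon$ positive and to push~$\varepsilon$ below the level~$\varepsilon_0(r)$ demanded in Definition~\ref{def:PAS}. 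Once Properties~1)--3) hold uniformly in $\varepsilon \in (0,\varepsilon_0]$, \cite[Theorem~11.10]{grune2017nonlinear} yields a single $\beta \in \mathcal{KL}$ together with recursive feasibility and the estimate $\|x_{\mu_N^\varepsilon}(k)\| \le \max\{\beta(\|\hat{x}\|,k),r\}$ on~$S$, which is the asserted $\varepsilon$-PAS of the origin.
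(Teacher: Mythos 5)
Your proposal is correct and follows essentially the same route as the paper: the paper's (sketched) proof likewise verifies Properties~1)--3) via Lemma~\ref{lem:uniformcont}, Proposition~\ref{prop:cost:controllability} combined with the relaxed-dynamic-programming horizon analysis yielding the stated $\alpha_N$, and the argument of Theorem~10 in \cite{bold2024data}, before invoking \cite[Theorem~11.10]{grune2017nonlinear}. Your careful treatment of the quantifier ordering (fixing $N$ from the nominal constants $B_i$, then shrinking $h_\mathcal{X}$ so that $\B_i \to B_i$ keeps the surrogate relaxation parameter positive and pushes $\varepsilon$ below $\varepsilon_0(r)$) is precisely the point the paper relies on via Proposition~\ref{prop:cost:controllability}.
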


\section{Numerical simulations}\label{sec:simulations}

\noindent In this section, we illustrate the practical asymptotic stability of kEDMD-MPC in numerical simulations. To this end, we consider the discrete-time nonlinear control-affine van-der-Pol oscillator
\begin{align}\label{eq:van der Pol}
    x^+ = x + \Delta t \binom{x_2}{\nu(1 - x_1)^2x_2 - x_1 + u} 
\end{align}
with parameters $\Delta t = 0.05$, $\nu = 0.1$ and control constraints $\mathbb{U} = [-2, 2]$ on the domain $\Omega = [-2, 2]^2$. 
To build the corresponding kEDMD surrogate~\eqref{eq:dynamics:kEDMD:control}, we used Wendland kernels with smoothness degree~$k = 1$, see Remark~\ref{rem:wendland}. 
The grid of cluster points~$\mathcal{X}_d$ consists of the Cartesian product of $\sqrt{d}$ one-dimensional Chebyshev nodes
\[
    \Big\{ \hspace*{-1.mm} \left(2 \cos \left(\tfrac{\pi(2i + 1)}{2 \cdot \sqrt{d}}\right)\hspace*{-1.mm}, 2 \cos\left(\tfrac{\pi(2j + 1)}{2 \cdot \sqrt{d}}\right)\right)^{\hspace*{-1mm}\top}\hspace*{-2.5mm}, i, j \in [1:\sqrt{d}]\Big\},
\] 
where $d\in \{441,1681\}$, i.e., $\sqrt{d} \in \{21,41\}$. 
This choice corresponds to a more evenly distributed error throughout the domain compared to a uniform grid, see~\cite{kohne2024infty}. 
For each data point~$x_i \in \mathcal{X}$, $d_i = 25$
random control values $u_{ij} \in \mathbb{U}$ are chosen, yielding the data points~$(x_{ij}, u_{ij}, x_{ij}^+)$. 
Herein, $x_{ij}$ and $u_{ij}$ are drawn according to our data requirements with $\varepsilon_c = \nicefrac{\sqrt{2}}{d}$ to specify the neighborhood of~$x_i$, see Section~\ref{sec:kEDMD}.\\
The MPC cost function is chosen as in~\eqref{eq:stagecosts} with matrices $Q = I_2$ and $R = 10^{-4}$. 
The following closed-loop simulations emanate from the initial state $x^0 = (0.5, 0.5)^\top$.
\begin{figure}[htb]
    \centering
    \includegraphics[width=0.5\linewidth]{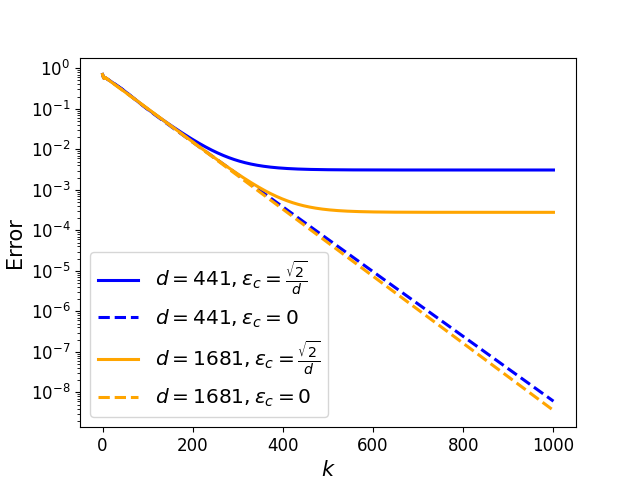}
    \caption{System~\eqref{eq:van der Pol}: Error $\|x(k)\|$ 
    of the kEDMD-MPC closed-loop with horizon $N = 10$ for vaying~$d$ and~$\varepsilon_c$.}
    \label{fig:MPC02}
\end{figure}
For optimization horizon~$N = 10$ and $d \in \{441,1681\}$ cluster points, Figure~\ref{fig:MPC02} shows the course of the closed-loop error for $\varepsilon_c = \nicefrac{\sqrt{2}}{d}$ compared to $\varepsilon_c = 0$, i.e., without an approximation error in the first step of the algorithm described in Section~\ref{sec:kEDMD}. In the beginning, the error decays very similarly until it stagnates for $\varepsilon_c = \nicefrac{\sqrt{2}}{d}$ and asymptotically approaches an offset after about $500$ steps. Moreover, we observe that this offset reaches a lower value for $d = 1681$ than for $d = 441$. In the case $\varepsilon_c = 0$, the error 
does not stagnate and shows
exponential convergence towards the origin for all times.
Figure~\ref{fig:MPC02} validates the practical asymptotic stability result that was proven in Theorem~\ref{thm:PAS} (for $\varepsilon_c = 0$) and shows that the stability rate is exponential. Furthermore, the simulations show that practical asymptotic stability is also obtained in the case $\varepsilon_c = \nicefrac{\sqrt{2}}{d} > 0$. 

\begin{figure}[htb]
    \centering
    \includegraphics[width=0.5\linewidth]{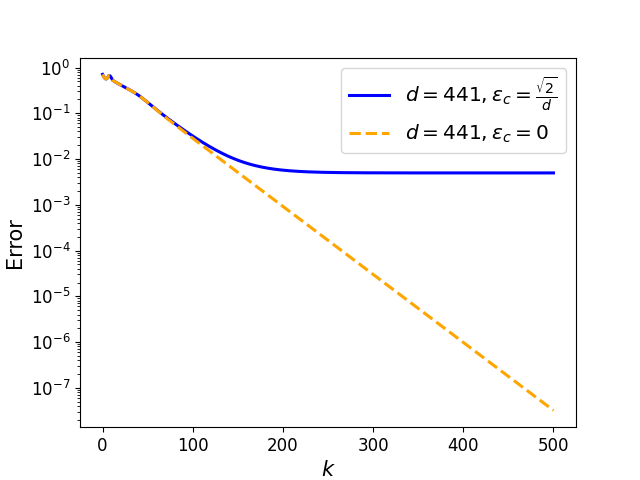}
    \caption{System~\eqref{eq:van der Pol}: Error $\|x(k)\|$ 
    of the kEDMD-MPC closed-loop with horizon $N = 20$ with varying~$\varepsilon_c$.}
    \label{fig:MPC02:N20}
\end{figure}

For the last considerations depicted in Figure~\ref{fig:MPC02:N20}, we increase the optimization horizon to $N = 20$ and fix the number of cluster points $ d=441$. As the value of the offset for $\varepsilon_c = \nicefrac{\sqrt{2}}{d}$ is similar to the case $N=10$, this indicates that the practical convergence stems from the error of the data-driven scheme and not from a too short prediction horizon. Last, we observe that the exponential decay in Figure~\ref{fig:MPC02} is visibly steeper for $N=20$.

\section{Conclusions} 

\noindent We have proposed the novel data-driven predictive control scheme kEDMD-MPC. In particular, we have rigorously shown practical asymptotic stability of data-driven MPC leveraging kernel EDMD, pointwise bounds on the full approximation error, and cost controllability of the original dynamics.

\bibliographystyle{plain}
\bibliography{references}

\end{document}